\def\({\left(}
\def\){\right)}
\newtheorem{lema}{Lemma}[section]
\newtheorem*{teorema*}{Theorem}
\newtheorem{theorem}[lema]{Theorem}
\newtheorem{definition}[lema]{Definition}
\hfill \fbox{}}
\hfill \fbox{}}
\def\beq{\begin{equation}}
\def\eeq{\end{equation}}
\def\epsilon{\varepsilon}
\begin{document}

\title{Arithmetic continuity in quasi cone metric spaces}

\author{Shallu Sharma}
\address{Department of Mathematics, University of Jammu } 
\email{shallujamwal09@gmail.com}

\author{Iqbal Kour}
\address{Department of Mathematics, University of Jammu}
\email{iqbalkour208@gmail.com}

\keywords{Arithmetic continuity, arithmetic convergence, arithmetic compactness}

\date{}
%\dedicatory{}
%\commby{}

\begin{abstract}  
 In this paper we have introduced arithmetic ff-continuity and arithmetic fb-continuity utilizing the concept of forward and backward arithmetic convergence in quasi cone metric spaces. These concepts are used to prove some fascinating results. The notions of forward and backward arithmetic compactness in quasi cone metric spaces are also established which are further utilized to prove some related results.      
\end{abstract}
\subjclass{40A35,40A05,26A15}

% ----------------------------------------------------------------------
%\begin{center}
\maketitle{ }
%\end{center}
% ----------------------------------------------------------------------

\section{Introduction}
A distance function is said to be quasi metric if symmetric condition is dropped from the definition of metric (see \cite{A,CZ,K,M,W,YH}). Various definitions of quasi cone metric have been given by various authors (see, for example, \cite{AK}). Since then, much study has been conducted on the quasi cone metric, particularly on fixed point theory. The notion of arithmetic convergence was introduced by Ruckle \cite{R} in the form of a sequence $\{\mathsf x_{n}\}$ defined on the set of natural numbers. The sequence $\{\mathsf x_{n}\}$ is said to be arithmetic convergent if for every $\mathsf u>0$ there exists $m\in \mathbb{Z}$ such that for each $n\in \mathbb{Z}, |\mathsf x_{n}-\mathsf x_{<n,m>}|<\mathsf u,$ where the greatest common divisor of $n$ and $m$ is denoted as $<n,m>.$ Another definition of arithmetic convergence given by Cakalli in \cite{C1} as : a sequence $\{\mathsf x_{n}\}$ is called arithmetically convergent if for each $u>0$ there exists $m_{0}\in \mathbb{Z}$ such that for all $n,m\in \mathbb{Z}$ that satisfy $<n,m>\geq m_{0},$ we have $|\mathsf x_{n}-\mathsf x_{<n,m>}|<u.$ For comprehensive details on arithmetic continuity and arithmetic convergence one can refer \cite{C1,YH1,YH2,YH3,YH4}. In this paper, the concepts of forward and backward arithmetic convergence in a quasi cone metric space are firstly introduced. These concepts are used to define forward and backward arithmetic continuity in quasi cone metric spaces which are further utilized to obtain some fascinating results. The notion of forward and backward arithmetic compactness is also introduced.
\section{Preliminaries}
\begin{definition}\cite{HZ}
Let $\mathsf E\subseteq \mathbb{R}$ be a Banach space. A set $\mathsf P$ contained in $\mathtt E$ is said to be a cone if
\begin{itemize}
\item[(i)] $\mathsf P$ is non-empty, closed and non-zero.
\item[(ii)] $\mathsf x, \mathsf y$ are elements of $\mathsf P$ and $\mathsf s,\mathsf t$ are positive real numbers, then $\mathsf s \mathsf x+\mathsf t \mathsf y$ is an element of $\mathsf P.$
\item[(iii)] the intersection of $\mathsf P$ and $- \mathsf P$ is $\{0\}.$ 
\end{itemize}
\end{definition}
Assuming $\mathsf P$ be a cone in $\mathsf E,$ we define the partial ordering relation $\leq$ on $\mathsf E$ in relation to $\mathsf P$ as : $\mathsf x\ll \mathsf y$ if and only if $\mathsf y-\mathsf x$ is an element of interior of $\mathsf P,\mathtt x\leq \mathtt y$ if and only if $\mathtt y-\mathtt x$ is an element of $ \mathsf{P}.$ The partial ordering $\mathtt x<\mathtt y$ indicates that $\mathtt x\leq \mathtt y,$ $\mathtt x\neq \mathtt y.$ Throughout this paper we shall assume that $\mathsf E$ is a Banach space, $\mathsf P$ is a cone with non-empty interior with the partial ordering as defined above. Also, we shall denote closure of any set $\mathsf B$ as $Cl(\mathsf B).$
\begin{definition}\cite{AK}
 Let $\mathsf X$ be a set that is non-empty and define a mapping $d:\mathsf X\times \mathsf X\to \mathsf E$ as follows:
\begin{itemize}
\item [(i)] $d(\mathsf x, \mathsf y)\geq 0$ for $\mathsf x, \mathsf y\in \mathsf X.$
\item [(ii)] $d(\mathsf x, \mathsf y)=0$ if and only if $\mathsf x=\mathsf y.$

\item [(iii)] $d(\mathsf x,\mathsf y)\leq d(\mathsf x, \mathsf z)+d(\mathsf z,\mathsf y)$ for $\mathsf x, \mathsf y, \mathsf z$ in $\mathsf X.$
\end{itemize}
Then $(\mathsf X,d)$ is said to be a quasi cone metric space.
\end{definition}
\begin{definition}\cite{SN}
Let $(\mathsf X,d)$ be a quasi cone metric space and $\{\mathsf x_{n}\}$ be a sequence in $\mathsf X.$ The sequence $\{\mathsf x_{n}\}$ is said to be forward  convergent (resp. backward convergent) to $\mathsf x_{0}$ in $\mathsf X$ if for each $\mathsf u\gg 0,\mathsf u\in \mathsf E$ there exists  $\mathsf N\in \mathbb{N}$ so that for every $\mathsf n\geq\mathsf N,$ we have $d(\mathsf x_{0},\mathsf x_{n})\ll u~(resp.~d(\mathsf x_{n},\mathsf x_{0})\ll \mathsf u).$ We denote it as $\mathsf x_{n}\stackrel{f}{\rightarrow} \mathsf x_{0}~(resp.~\mathsf x_{n}\stackrel{b}{\rightarrow} \mathsf x_{0}) .$
\end{definition}
\begin{definition}\cite{SN}
 Let $\{\mathsf x_{n}\}$ be a sequence in a quasi cone metric space $(\mathsf X,d).$ Then the sequence $\{\mathsf x_{n}\}$ is said to be forward Cauchy (resp. backward Cauchy) if for each $\mathsf u\gg 0,\mathsf u\in \mathsf E$ there exists $\mathsf N\in \mathbb{N}$ such that for each $m\geq n\geq \mathsf N,d(\mathsf x_{n},\mathsf x_{m})\ll \mathsf u~ (resp.~d(\mathsf x_{m}, \mathsf x_{n})\ll \mathsf u).$ 
\end{definition}
\begin{definition}\cite{YHC}
A cone metric space $(\mathsf X,d)$ is said to be forward sequentially compact if each sequence in $\mathsf X$ has  a forward convergent subsequence. 
\end{definition}
\section{Main results}
\begin{definition}
Let $(\mathsf X,d)$ be a quasi cone metric space and $\{\mathsf x_{n}\}$ be a sequence in $\mathsf X.$ Then $\{\mathsf x_{n}\}$ is said to be forward arithmetic convergent (resp. backward arithmetic convergent) if for every $\mathsf u\in \mathsf E,u\gg 0$ there exists $\mathsf N\in \mathbb Z$ such that $d(\mathsf x_{<n,m>},\mathsf x_{n})\ll u(resp.~ d(\mathsf x_{n},\mathsf x_{<n,m>})\ll u),$ whenever $<n,m>\geq \mathsf N,n,m\in \mathbb Z.$ It shall be denoted as $\mathsf x_{n}\stackrel{amf}{\rightarrow} \mathsf x_{<n,m>} (resp.~ \mathsf x_{n}\stackrel{amb}{\rightarrow}\mathsf x_{<n,m>}).$    
\end{definition}
\begin{definition}
Let $(\mathsf X,d_{\mathsf X})$ and $(\mathsf Y,d_{\mathsf Y})$ be two quasi cone metric spaces. A function $f$ from $\mathtt X$ to $\mathtt Y$ is said to be arithmetic $ff-$continuous (resp. arithmetic $fb-$continuous) at a point $x\in \mathsf X$ if $x_{n}\stackrel{amf}{\rightarrow} x$  in $(\mathsf X,d_{\mathsf X})$ implies that $f(x_{n})\stackrel{amf}{\rightarrow} f(x)(resp.~f(x_{n})\stackrel{amb}{\rightarrow}f(x))$ in $(\mathsf Y,d_{\mathsf Y}).$ 
\end{definition}
\begin{definition}
Let $(\mathsf X,d_{\mathsf X})$ and $(\mathsf Y,d_{\mathsf Y})$ be two quasi cone metric spaces. A function $f$ from $\mathsf X$ to $\mathsf Y$ is said to be uniformly continuous if for each $\mathsf x,\mathsf y\in \mathsf X,\mathsf u'\in \mathsf E',\mathsf u'\gg 0$ there exists $\mathsf u\in \mathsf E,\mathsf u\gg 0$ such that $d_{\mathsf Y}(f(\mathsf x),f(\mathsf y))\ll u',$ whenever $d_{\mathsf  X}(\mathsf x,\mathsf y)\ll u.$
\end{definition} 
Note that forward uniform continuity and backward uniform continuity are same.
\begin{theorem}
Let $(\mathsf X,d_{\mathsf X})$ and $(\mathsf Y,d_{\mathsf Y})$ be two quasi cone metric spaces. A function $f$ from $\mathsf X$ to $\mathsf Y$ is arithmetic $ff-$continuous if $f$ is uniformly continuous.
\end{theorem}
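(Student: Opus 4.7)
The plan is to unpack the two definitions in play and chain them together; there is no real obstacle here, since uniform continuity is by design the hypothesis that transports one sided estimates on $d_{\mathsf X}$ into the same sided estimates on $d_{\mathsf Y}$.

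First I would fix an arbitrary sequence $\{\mathsf x_{n}\}$ in $\mathsf X$ with $\mathsf x_{n}\stackrel{amf}{\rightarrow}\mathsf x_{<n,m>}$ and, for an arbitrary element $\mathsf u'\in \mathsf E'$ with $\mathsf u'\gg 0$, invoke uniform continuity of $f$ to produce an element $\mathsf u\in \mathsf E$ with $\mathsf u\gg 0$ such that the implication $d_{\mathsf X}(\mathsf x,\mathsf y)\ll \mathsf u \Rightarrow d_{\mathsf Y}(f(\mathsf x),f(\mathsf y))\ll \mathsf u'$ holds for all $\mathsf x,\mathsf y\in \mathsf X$.

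Next I would feed this $\mathsf u$ into the forward arithmetic convergence hypothesis for $\{\mathsf x_{n}\}$ to obtain $\mathsf N\in \mathbb Z$ such that $d_{\mathsf X}(\mathsf x_{<n,m>},\mathsf x_{n})\ll \mathsf u$ whenever $<n,m>\geq \mathsf N$. Setting $\mathsf x=\mathsf x_{<n,m>}$ and $\mathsf y=\mathsf x_{n}$ in the uniform continuity implication yields $d_{\mathsf Y}(f(\mathsf x_{<n,m>}),f(\mathsf x_{n}))\ll \mathsf u'$ for all $n,m\in \mathbb Z$ with $<n,m>\geq \mathsf N$.

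Since $\mathsf u'\gg 0$ was arbitrary in $\mathsf E'$, this is precisely the statement that $f(\mathsf x_{n})\stackrel{amf}{\rightarrow} f(\mathsf x_{<n,m>})$ in $(\mathsf Y,d_{\mathsf Y})$, establishing arithmetic $ff$-continuity of $f$ at every point. The only thing to be careful about is to use the $d_{\mathsf X}(\mathsf x_{<n,m>},\mathsf x_{n})$ ordering consistently in the forward (not backward) direction, so that the conclusion really is forward arithmetic convergence of the image sequence rather than the backward analogue.
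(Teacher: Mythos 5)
Your proposal is correct and follows essentially the same route as the paper's own proof: fix a forward arithmetic convergent sequence, extract $\mathsf u$ from uniform continuity for a given $\mathsf u'$, feed that $\mathsf u$ into the arithmetic convergence hypothesis, and chain the two implications. If anything, your version states the quantifier order and the choice $\mathsf x=\mathsf x_{<n,m>}$, $\mathsf y=\mathsf x_{n}$ more explicitly than the paper does.
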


\begin{proof}
Suppose that $f$ is uniformly continuous and $\{\mathsf x_{n}\}$ is a sequence in $\mathsf X$ that is forward arithmetic convergent. Then for every $\mathsf u'\in \mathsf E',u'\gg 0$ there exists $\mathsf u\in \mathsf E,\mathsf u\gg 0$ such that $d_{\mathsf Y}(f(\mathsf x),f(\mathsf y))\ll \mathsf u',$ whenever $d_{\mathsf X}(\mathsf x,\mathsf y)\ll \mathsf u$ as $f$ is uniformly continuous. Also, $\{\mathsf x_{n}\}$ is forward arithmetic convergent in $\mathsf X.$ Then for above $\mathsf u\gg 0,$ there exists $n_{0}\in \mathbb N$ such that for all $m,n\in \mathbb{Z}$ that satisfy $<n,m>\geq n_{0},$ we have $d_{\mathsf Y}(f(\mathsf x_{<n,m>}),f(x_{n}))\ll \mathsf u'$ when $d_{\mathsf X}(x_{<n,m>},x_{n})\ll \mathsf u$ for each $n.$ Therefore, $\{f(\mathsf x_{n})\}$ is forward arithmetic convergent sequence. Hence $f$ is arithmetic $ff-$continuous.
\end{proof}       

\begin{definition}
Let $(\mathsf X,d_{\mathsf X})$ and $(\mathsf Y,d_{\mathsf Y})$ be two quasi cone metric spaces. A sequence $\{f_{n}\}$ from $\mathsf X$ to $\mathsf Y$ is said to be forward arithmetic convergent (resp. backward arithmetic convergent) if for every $\mathsf u'\in \mathsf E',u'\gg 0$ and for every $\mathsf x\in \mathsf X$ there exists $n_{0}\in \mathbb{N}$ such that for each $m,n\in \mathbb{Z}$ that satisfy $<m,n>\geq n_{0},$ we have $d_{\mathsf Y}(f_{<n,m>}(x),f_{n}(x))\ll u'(resp.~ d_{\mathsf Y}(f_{n}(\mathsf x),f_{<n,m>}(x))\ll u').$   
\end{definition}
\begin{theorem}\label{th:4}
Let $(\mathsf X,d_{\mathsf X})$ and $(\mathsf Y,d_{\mathsf Y})$ be two quasi cone metric spaces. If $\{f_{n}\}$ is a sequence of forward arithmetic convergent functions from $\mathsf X$ to $\mathsf Y$ and $\mathsf x_{0}\in \mathsf X$ so that $f_{n}(\mathsf x)$ tends to $\mathsf y_{n}$ as $\mathsf x$ tends to $\mathsf x_{0},$ then the sequence $\{\mathsf y_{n}\}$ is also forward arithmetic convergent.
\end{theorem}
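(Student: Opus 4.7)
The plan is to establish the forward arithmetic convergence of $\{\mathsf y_n\}$ directly from the definition: given an arbitrary $\mathsf u'\in \mathsf E'$ with $\mathsf u'\gg 0$, I need to exhibit $N\in\mathbb Z$ such that $d_{\mathsf Y}(\mathsf y_{\langle n,m\rangle},\mathsf y_n)\ll \mathsf u'$ whenever $\langle n,m\rangle\geq N$. The core idea is a three-term triangle decomposition through an auxiliary evaluation point $\mathsf x\in \mathsf X$:
\[
d_{\mathsf Y}(\mathsf y_{\langle n,m\rangle},\mathsf y_n)\;\leq\; d_{\mathsf Y}(\mathsf y_{\langle n,m\rangle},f_{\langle n,m\rangle}(\mathsf x))+d_{\mathsf Y}(f_{\langle n,m\rangle}(\mathsf x),f_n(\mathsf x))+d_{\mathsf Y}(f_n(\mathsf x),\mathsf y_n).
\]
After splitting $\mathsf u'=\mathsf u'/3+\mathsf u'/3+\mathsf u'/3$ (each summand still in the interior of the cone $\mathsf P'$), the goal reduces to dominating each term on the right by $\mathsf u'/3$.

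For the middle term, I would invoke the hypothesis that $\{f_n\}$ is forward arithmetic convergent: applied at the chosen point $\mathsf x$, this produces $n_0\in\mathbb N$ such that $d_{\mathsf Y}(f_{\langle n,m\rangle}(\mathsf x),f_n(\mathsf x))\ll \mathsf u'/3$ whenever $\langle n,m\rangle\geq n_0$. For the two outer terms, I would use the standing assumption that $f_k(\mathsf x)\to \mathsf y_k$ as $\mathsf x\to \mathsf x_0$, applied with $k=n$ and with $k=\langle n,m\rangle$: choosing $\mathsf x$ sufficiently close to $\mathsf x_0$ forces both $d_{\mathsf Y}(\mathsf y_n,f_n(\mathsf x))\ll \mathsf u'/3$ and $d_{\mathsf Y}(\mathsf y_{\langle n,m\rangle},f_{\langle n,m\rangle}(\mathsf x))\ll \mathsf u'/3$. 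Summing the three bounds via additivity of the interior of $\mathsf P'$ gives $d_{\mathsf Y}(\mathsf y_{\langle n,m\rangle},\mathsf y_n)\ll \mathsf u'$, and taking $N=n_0$ completes the argument.

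The main obstacle I expect is the coordination of quantifiers between the two hypotheses. The forward arithmetic convergence of $\{f_n\}$ is only a pointwise statement in $\mathsf x$, so the threshold $n_0$ provided by that definition depends on the auxiliary point $\mathsf x$ at which it is applied; meanwhile, the outer terms are small only when $\mathsf x$ lies in a neighborhood of $\mathsf x_0$ that a priori depends on the indices $n$ and $\langle n,m\rangle$. The clean way to finesse this is to fix $\mathsf x$ near $\mathsf x_0$ first, obtain $n_0=n_0(\mathsf x)$ from the arithmetic convergence hypothesis, and then, for each pair $(n,m)$ with $\langle n,m\rangle\geq n_0$, shrink $\mathsf x$ toward $\mathsf x_0$ (within the neighborhood where the middle-term estimate was already secured) to bring the outer terms below $\mathsf u'/3$. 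Writing out this order of choices carefully is the only delicate point; the rest is routine triangle-inequality bookkeeping in the cone $\mathsf P'$.
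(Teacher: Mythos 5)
Your three-term decomposition is, at bottom, the same argument the paper gives, only written out more carefully: the paper applies the hypothesis to get $d_{\mathsf Y}(f_{\langle n,m\rangle}(\mathsf x),f_{n}(\mathsf x))\ll \mathsf u'$ for $\langle n,m\rangle\geq n_{0}$, then ``fixes $n,m$ and lets $\mathsf x\to \mathsf x_{0}$'' to conclude $d_{\mathsf Y}(\mathsf y_{\langle n,m\rangle},\mathsf y_{n})\ll \mathsf u'$ in a single stroke. Your triangle inequality with the $\mathsf u'/3$ split is the honest way to execute that limit, and it has the added virtue of not requiring the open relation $\ll \mathsf u'$ to survive a passage to the limit, which the paper's one-line version silently assumes.

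The difficulty you flag at the end, however, is not actually resolved by the order of choices you propose, and this is a genuine gap. The middle-term estimate $d_{\mathsf Y}(f_{\langle n,m\rangle}(\mathsf x),f_{n}(\mathsf x))\ll \mathsf u'/3$ is secured only at the single point $\mathsf x$ at which you invoked the (pointwise) definition, with a threshold $n_{0}(\mathsf x)$; there is no neighborhood of $\mathsf x_{0}$ on which it holds with a common threshold, so ``shrinking $\mathsf x$ toward $\mathsf x_{0}$ within the neighborhood where the middle-term estimate was already secured'' has no content --- the moment you move $\mathsf x$, the threshold changes and the middle bound is lost. Under the pointwise reading of the definition the quantifiers simply do not close: $N$ must be chosen before $(n,m)$, yet you need $\mathsf x$ close to $\mathsf x_{0}$ depending on $(n,m)$ while also keeping $\langle n,m\rangle\geq n_{0}(\mathsf x)$, and nothing prevents $n_{0}(\mathsf x)$ from growing as $\mathsf x\to \mathsf x_{0}$. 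The paper sidesteps this by restating the hypothesis with $n_{0}$ independent of $\mathsf x$ (i.e.\ treating the forward arithmetic convergence of $\{f_{n}\}$ as uniform in $\mathsf x$); under that uniform reading your argument closes immediately with $N=n_{0}$. You should either assume that uniformity explicitly or acknowledge that with the purely pointwise definition your chain of choices does not yield the theorem.
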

\begin{proof}
By the definition of forward arithmetic convergence, for $u'\in \mathsf E',u'\gg 0$ there exists $n_{0}\in \mathbb{N}$ such that for every $m,n\in \mathbb{Z}$ that satisfy $<n,m>\geq n_{0}$ and for each $x\in \mathsf X,$  we have $d_{\mathsf{Y}}(f_{<n,m>}(x),f_{n}(x))\ll \mathsf u'.$ Fix $n,m$ and suppose $x$ tends to $x_{0},$ we have $d_{\mathsf Y}(\mathsf y_{<n,m>},\mathsf y_{n})\ll u'.$ Therefore, $\{\mathsf y_{n}\}$ is forward arithmetic convergent. This proves the theorem.   
\end{proof}
\begin{theorem}
Let $(\mathsf X,d_{\mathsf X})$ and $(\mathsf Y,d_{\mathsf Y})$ be two quasi cone metric spaces. If $\{f_{n}\}$ is a sequence of backward arithmetic convergent functions from $\mathsf X$ to $\mathsf Y$ and $\mathsf x_{0}\in \mathsf X$ so that $f_{n}(\mathsf x)$ tends to $\mathsf y_{n}$ as $\mathsf x$ tends to $\mathsf x_{0},$ then the sequence $\{\mathsf y_{n}\}$ is also backward arithmetic convergent.
\end{theorem}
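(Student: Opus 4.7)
The plan is to follow the same template used in the proof of Theorem~\ref{th:4}, swapping the order of the two arguments of $d_{\mathsf Y}$ so that the estimate reads in the backward direction. First, given $u'\in \mathsf E'$ with $u'\gg 0$, the hypothesis that $\{f_n\}$ is backward arithmetic convergent supplies $n_0\in\mathbb N$ such that for all $m,n\in\mathbb Z$ satisfying $<n,m>\geq n_0$ and every $x\in\mathsf X$,
\[
  d_{\mathsf Y}\bigl(f_n(x),\,f_{<n,m>}(x)\bigr)\ll u'.
\]
Next, I would fix such a pair $n,m$ and let $x\to x_0$. By the pointwise hypothesis, $f_n(x)\to y_n$ and $f_{<n,m>}(x)\to y_{<n,m>}$, so passing to the limit yields $d_{\mathsf Y}(y_n,y_{<n,m>})\ll u'$. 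Since $n_0$ was chosen independently of $(n,m)$, this is exactly the definition of backward arithmetic convergence of $\{y_n\}$, and the conclusion follows.

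The only non-routine step is transferring the strict relation $\ll u'$ through the limit in $x$, which relies on the fact that $u'-\mathrm{int}(\mathsf P)$ is an open subset of $\mathsf E'$ together with appropriate continuity of $d_{\mathsf Y}$ in each of its two arguments relative to the backward limit of $x$. This is precisely the same step that is invoked (without further elaboration) in the proof of Theorem~\ref{th:4} for the forward variant, so the same justification carries over verbatim. Conceptually, the backward theorem is obtained from the forward one by the single symmetry operation $d_{\mathsf Y}(a,b)\mapsto d_{\mathsf Y}(b,a)$, and I anticipate no genuinely new obstacle; the main thing to be careful about is simply that one must take the backward limit of $x$ (rather than the forward one) when justifying the passage of $\ll u'$ through the limit, so that the first-argument convergence used is compatible with the orientation chosen in the definition.
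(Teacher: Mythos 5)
Your proposal is exactly what the paper does: its proof of this theorem is omitted with the remark that it ``follows trivially from Theorem~\ref{th:4},'' and your argument is precisely that routine adaptation (swap the two arguments of $d_{\mathsf Y}$, fix $n,m$ with $<n,m>\geq n_0$, and let $x\to x_0$). One small caveat: preserving the strict relation $\ll u'$ under the limit in $x$ is really a closedness matter rather than a consequence of $u'-\mathrm{int}(\mathsf P)$ being open (limits preserve membership in closed sets, so one would apply the hypothesis with $u'/2$ and use that $\mathsf P$ is closed to recover $\ll u'$), but this glossed step is identical to the one in the paper's own proof of Theorem~\ref{th:4}, so your route matches the intended one.
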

\begin{proof}
The proof is omitted as it follows trivially from Theorem \ref{th:4}.
\end{proof}
\begin{theorem}
Let $(\mathsf X,d_{\mathsf X})$ and $(\mathsf Y,d_{\mathsf Y})$ be two quasi cone metric spaces and $\{f_{n}\}$ be a sequence of arithmetic $ff-$continuous functions from $\mathsf X$ to $\mathsf Y,$ forward convergence is equivalent to backward convergence in $\mathsf Y$ and $\{f_{n}\}$ is forward convergent to $f$ uniformly. Then $f$ is arithmetic $ff-$continuous. 
\end{theorem}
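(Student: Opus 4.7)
The plan is to adapt the classical ``uniform limit of continuous functions is continuous'' argument to the arithmetic setting, using a triangle-inequality sandwich.

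Let $\{\mathsf x_n\}$ be forward arithmetic convergent in $\mathsf X$; the goal is to show $\{f(\mathsf x_n)\}$ is forward arithmetic convergent in $\mathsf Y$. Given $\mathsf u'\in \mathsf E'$ with $\mathsf u'\gg 0$, first choose $\mathsf v\gg 0$ in $\mathsf E'$ with $3\mathsf v\ll \mathsf u'$ (a standard cone-interior splitting, which the authors use implicitly elsewhere). I would then invoke the three hypotheses in turn to kill three terms of the triangle inequality
\[
d_{\mathsf Y}(f(\mathsf x_{<n,m>}),f(\mathsf x_n)) \leq d_{\mathsf Y}(f(\mathsf x_{<n,m>}),f_k(\mathsf x_{<n,m>})) + d_{\mathsf Y}(f_k(\mathsf x_{<n,m>}),f_k(\mathsf x_n)) + d_{\mathsf Y}(f_k(\mathsf x_n),f(\mathsf x_n)).
\]

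Next, I would use uniform forward convergence of $\{f_n\}$ to $f$ to pick a single $k$ large enough that $d_{\mathsf Y}(f_k(\mathsf x),f(\mathsf x))\ll \mathsf v$ for every $\mathsf x\in \mathsf X$; this handles the third term on the right immediately. The first term is a \emph{backward} distance $d_{\mathsf Y}(f(\cdot),f_k(\cdot))$, which is why the hypothesis that forward and backward convergence coincide in $\mathsf Y$ is needed: it upgrades the uniform forward estimate to a uniform backward estimate $d_{\mathsf Y}(f(\mathsf x),f_k(\mathsf x))\ll \mathsf v$ as well, handling the first term. Finally, for this fixed $k$, arithmetic $ff$-continuity of $f_k$ applied to the forward arithmetic convergent sequence $\{\mathsf x_n\}$ yields an $n_0$ such that $d_{\mathsf Y}(f_k(\mathsf x_{<n,m>}),f_k(\mathsf x_n))\ll \mathsf v$ whenever $<n,m>\geq n_0$, controlling the middle term. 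Adding these three estimates gives $d_{\mathsf Y}(f(\mathsf x_{<n,m>}),f(\mathsf x_n))\ll 3\mathsf v\ll \mathsf u'$ for $<n,m>\geq n_0$, proving $\{f(\mathsf x_n)\}$ is forward arithmetic convergent.

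The main subtlety, and the only place the hypotheses interact nontrivially, is the first term of the triangle inequality: $d_{\mathsf Y}$ is asymmetric, so the one-sided uniform forward convergence of $\{f_n\}$ to $f$ does not a priori control $d_{\mathsf Y}(f(\mathsf x),f_k(\mathsf x))$. The equivalence of forward and backward convergence in $\mathsf Y$ is what bridges this gap; without it, only an $fb$-continuity conclusion would be available. Everything else (the three-way splitting in the cone, selecting $k$ before $n_0$, and passing from pointwise to uniform estimates thanks to the uniform hypothesis) is routine once that point is understood.
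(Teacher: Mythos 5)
Your proof is correct and follows essentially the same route as the paper's: a three-term triangle inequality in which uniform forward convergence controls one outer term, the forward--backward equivalence in $\mathsf Y$ upgrades the estimate to control the other, and arithmetic $ff$-continuity of a single fixed $f_k$ controls the middle term. The only cosmetic difference is that you bound $d_{\mathsf Y}(f(\mathsf x_{<n,m>}),f(\mathsf x_n))$ directly (and your labelling of which outer term is the ``forward'' one is mirrored relative to the paper's conventions), whereas the paper bounds the reversed distance, concludes $fb$-continuity first, and then invokes the equivalence once more at the end.
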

\begin{proof}
Choose $\mathsf u'\in \mathsf E',\mathsf u'\gg 0.$ Suppose that $\{\mathsf x_{n}\}$ is a forward arithmetic convergent sequence in $\mathsf X.$ Since $\{f_{n}\}$ is uniformly forward convergent to $f,$ there exists $\mathsf N\in \mathbb N$ such that $d_{\mathsf Y}(f(\mathsf x), f_{n}(\mathsf x))\ll \frac{u'}{3}$ for all $n\geq \mathsf N$ and $x\in \mathsf X.$ Particularly, $f_{n}(\mathsf x_{<n,m>})$ is forward convergent to $f(\mathsf x_{<n,m>})$ and hence $f_{n}(\mathsf x_{<n,m>})$ is backward convergent to $f(\mathsf x_{<n,m>}).$ Therefore, there exist $\mathsf M\in \mathbb{N}$ such that $d_{\mathsf{Y}}(f_{n}(\mathsf x_{<n,m>}),f(\mathsf x_{<n,m>}))\ll \frac{u'}{3}$ for all $n\geq \mathsf M.$ Let $\mathsf N'=\max. (\mathsf N,\mathsf M).$ Also, $\{f_{n}\}$ is a sequence of arithmetic $ff-$continuous functions. Particularly, $f_{\mathsf N'}$ is arithmetic $ff-$continuous function. Since forward convergence in $\mathsf Y$ is equivalent to backward convergence. Then $f_{\mathsf N'}$ is also arithmetic $fb-$continuous. Therefore, there exists $n_{0}> \mathsf{N}$ and $\mathsf u\in \mathsf E,\mathsf u\gg 0$ such that $d_{\mathsf X}(\mathsf x_{<n,m>},\mathsf x_{n})\ll u$ implies that $d_{\mathsf Y}(f_{\mathsf N'}(\mathsf x_{n}),f_{\mathsf N'}(\mathsf x_{<n,m>})\ll \frac{u'}{3}\forall m,n\in \mathbb{N}$ such that $<n,m>\geq m_{0}.$ Moreover, for $d_{\mathsf X}(x_{<n,m>},x_{n})\ll u $ and $<n,m>\geq m_{0},$ we see that 
\begin{eqnarray*}
d_{\mathsf Y}(f(\mathsf x_{n}),f(\mathsf x_{<n,m>}))
&\leq& d_{\mathsf Y}(f(\mathsf x_{n}),f_{\mathsf N'}(\mathsf x_{n}))\\
&+&d_{\mathsf Y}(f_{\mathsf N'}(\mathsf x_{n}),f_{\mathsf N'}(\mathsf x_{<n,m>}))+d_{\mathsf Y}(f_{\mathsf N'}(\mathsf x_{<n,m>}),f(\mathsf x_{<n,m>}))\\
&\ll & \frac{\mathsf u'}{3}+\frac{\mathsf u'}{3}+\frac{\mathsf u'}{3}\\
&=&\mathsf u'.
\end{eqnarray*}
Hence $f$ is arithmetic $fb-$continuous. As forward convergence is equivalent to backward convergence $f$ is also arithmetic $ff-$continuous. This proves the theorem.
\end{proof}
\begin{theorem}
Let $(\mathsf X,d_{\mathsf X})$ and $(\mathsf Y,d_{\mathsf Y})$ be two quasi cone metric spaces and forward convergence in $\mathsf Y$ is equivalent to backward convergence. The set of all arithmetic $ff-$continuous functions from $\mathsf X$ to $\mathsf Y$ is a closed subset of all continuous functions from $\mathsf X$ to $\mathsf Y.$ That is, $Cl(\mathsf A_{mff})(\mathsf X,\mathsf Y)=A_{mff}(\mathsf X,\mathsf Y).$ Here $A_{mff}(\mathsf X,\mathsf Y)$ is the set of all functions from $\mathsf X$ to $\mathsf Y$ that are arithmetic $ff-$continuous.    
\end{theorem}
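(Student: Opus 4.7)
The plan is to reduce this closure result to the previous theorem, which is essentially a ``uniform limit of arithmetic $ff$-continuous functions is arithmetic $ff$-continuous'' statement. The equality $Cl(A_{mff})(\mathsf X,\mathsf Y) = A_{mff}(\mathsf X,\mathsf Y)$ is of course only the non-trivial inclusion $Cl(A_{mff})(\mathsf X,\mathsf Y) \subseteq A_{mff}(\mathsf X,\mathsf Y)$, since the reverse inclusion holds for any subset.

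First I would fix $f\in Cl(A_{mff})(\mathsf X,\mathsf Y)$ and produce a sequence $\{f_n\}\subset A_{mff}(\mathsf X,\mathsf Y)$ that is uniformly forward convergent to $f$ (this is just the definition of belonging to the closure of a subset of the function space under uniform convergence, which is the notion implicit in the previous theorem). By construction, each $f_n$ is arithmetic $ff$-continuous from $(\mathsf X,d_{\mathsf X})$ to $(\mathsf Y,d_{\mathsf Y})$.

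Second, I would invoke the hypothesis that forward convergence in $\mathsf Y$ is equivalent to backward convergence. This makes the three hypotheses of the preceding theorem available: a sequence of arithmetic $ff$-continuous functions $\{f_n\}$; the equivalence of forward and backward convergence in $\mathsf Y$; and uniform forward convergence of $\{f_n\}$ to $f$. Applying that theorem directly yields that $f$ is arithmetic $ff$-continuous, i.e.\ $f\in A_{mff}(\mathsf X,\mathsf Y)$. Hence $Cl(A_{mff})(\mathsf X,\mathsf Y)\subseteq A_{mff}(\mathsf X,\mathsf Y)$, and combining with the trivial inclusion completes the proof.

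The main obstacle, if any, is really a book-keeping one: making explicit that the ``closure'' in the function space here is being taken with respect to uniform convergence of the associated cone-valued distance (the same notion used in the previous theorem), and checking that a generic element of this closure really does come from a uniformly forward convergent sequence of arithmetic $ff$-continuous functions. Once this identification is made, the conclusion is an immediate corollary of the preceding theorem with no new $u'/3$ estimate required.
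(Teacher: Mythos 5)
Your proposal is correct and takes essentially the same approach as the paper: the paper's own proof of this closure statement is just the $\mathsf u'/3$ argument of the preceding uniform-limit theorem written out again for a sequence $\{f_n\}\subset A_{mff}(\mathsf X,\mathsf Y)$ converging to $f$, which is exactly the reduction you invoke as a black box. Your remark about needing the convergence coming from membership in the closure to be \emph{uniform} is well taken --- the paper itself passes from ``$\{f_n\}$ is forward convergent to $f$'' to ``$f_n$ is forward convergent uniformly to $f$'' without comment, so you have in fact isolated the only point where either argument requires care.
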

\begin{proof}
Suppose $f\in Cl(\mathsf A_{mff})(\mathsf X,\mathsf Y).$ Then there is a sequence $\{f_{n}\}$ in $A_{mff}(\mathsf X,\mathsf Y)$ such that $\{f_{n}\}$ is forward convergent to $f.$ Let $\mathsf u'\in \mathsf E',\mathsf u'\gg 0.$ Consider a forward arithmetic convergent sequence $\{\mathsf x_{n}\}$ in $\mathsf {X}.$ As $f_{n}$ is forward convergent uniformly to $f.$ Then for all $\mathsf x\in \mathsf X$ there exists $\mathsf N_{1}\in \mathbb{N}$ such that $d_{\mathsf Y}(f(\mathsf x),f_{n}(\mathsf x)\ll \frac{u'}{3},\forall n\geq \mathsf {N}_{1}.$ Particularly, $f_{n}(\mathsf x_{<n,m>})$ is forward convergent to $f.$ Also, $f_{n}(\mathsf x_{<n,m>})$ is backward convergent to $f$ as forward convergence is equivalent to backward convergence. Hence there exists $\mathsf N_{2}\in \mathsf N$ such that $d_{\mathsf Y}(f_{n}(\mathsf x_{<n,m>}),f(\mathsf x_{<n,m>}))\ll \frac{u'}{3}\forall n\geq \mathsf N_{2}.$ Choose $\mathsf N=\max. \{\mathsf N_{1},\mathsf N_{2}\}.$ Also, $\{f_{n}\}$ is arithmetic $ff-$continuous. Particularly, $f_{\mathsf N}$ is arithmetic $ff-$continuous. Since forward convergence in $\mathsf Y$ is equivalent to backward convergence we see that $f_{\mathsf N}$ is also arithmetic $fb-$continuous. Hence there exists $m_{0}>\mathsf N$ and $\mathsf u\in \mathsf E,\mathsf u\gg 0$ such that $d_{\mathsf X}(\mathsf x_{<n,m>},\mathsf x_{n})\ll \mathsf u$ implies that $d_{\mathsf Y}(f_{\mathsf N}(\mathsf x_{n}),f_{\mathsf N}( \mathsf x_{<n,m>}))\ll  \frac{u'}{3},~\forall~m,n\in \mathbb{Z}$ and $<m,n>\geq m_{0}.$ Therefore, for $d_{\mathsf X}(\mathsf x_{<n,m>},\mathsf x_{n})\ll \mathsf u$ and $<m,n>\geq m_{0},$ we get
\begin{eqnarray*}
d_{\mathsf Y}( f(\mathsf x_{n}),f( \mathsf x_{<n,m>})) &\leq&d_{\mathsf Y}(f(\mathsf x_{n}),f_{\mathsf N}(\mathsf x_{n}))\\
&+&d_{\mathsf Y}(f_{\mathsf N}(\mathsf x_{n}),f_{\mathsf N}(\mathsf x_{<n,m>})))+d_{\mathsf Y}(f_{\mathsf N}(\mathsf x_{<n,m>}),f(\mathsf x_{<n,m>}))\\
&\ll& \frac{\mathsf u'}{3}+\frac{\mathsf u'}{3}+\frac{\mathsf u'}{3}\\
&=&\mathsf u'.
\end{eqnarray*}
Hence $f$ is arithmetic $fb-$continuous. Also $f$ is arithmetic $ff-$continuous as forward convergence is equivalent to backward convergence. Therefore, $\mathsf A_{mff}(\mathsf X,\mathsf Y)$ contains $f.$ Hence the proof. 
\end{proof}

\begin{definition}
Let $(\mathsf X,d_{\mathsf X})$ and $(\mathsf Y,d_{\mathsf Y})$ be two quasi-cone metric spaces. A function $f:\mathsf X\to \mathsf Y$ is said to be forward $(c\mathsf {A}\mathsf{C})-$continuous if it maps forward convergent sequences in $\mathsf X$ to forward arithmetic convergent sequences in $\mathsf Y.$ That is, $\{f(\mathsf x_{n})\}$ is a forward arithmetic convergent sequence in $\mathsf Y,$ whenever $\{\mathsf x_{n}\}$ is a forward convergent sequence in $\mathsf X.$  

\end{definition}
\begin{theorem}\label{th:1}
Let $(\mathsf X,d_{\mathsf X})$ and $(\mathsf Y,d_{\mathsf Y})$ be two quasi-cone metric spaces such that forward convergence in $\mathsf Y$ is equivalent to backward convergence and $\{f_{n}\}$ be a sequence consisting of forward $(c\mathsf A \mathsf C)-$continuous functions from $\mathsf X$ to $\mathsf Y$ such that $\{f_{n}\}$ is forward convergent to $f$ uniformly. Then $f$ is forward $(c\mathsf A \mathsf C)-$continuous. 
\end{theorem}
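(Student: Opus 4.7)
The plan is to mirror the three-$\mathsf u'/3$ triangle argument used in the two preceding theorems, with the single modification that the input sequence is forward convergent rather than forward arithmetic convergent. Fix $\mathsf u'\in \mathsf E'$ with $\mathsf u'\gg 0$, and let $\{\mathsf x_n\}$ be an arbitrary forward convergent sequence in $\mathsf X$. The goal is to verify that $\{f(\mathsf x_n)\}$ is forward arithmetic convergent in $\mathsf Y$, that is, that $d_{\mathsf Y}(f(\mathsf x_{<n,m>}),f(\mathsf x_n))\ll \mathsf u'$ whenever $<n,m>$ is sufficiently large.

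First I would use uniform forward convergence of $f_n$ to $f$ to select $\mathsf N_1\in\mathbb N$ with $d_{\mathsf Y}(f(\mathsf x),f_n(\mathsf x))\ll \mathsf u'/3$ for all $n\geq \mathsf N_1$ and all $\mathsf x\in \mathsf X$; evaluating at $\mathsf x=\mathsf x_{<n,m>}$ will control the first triangle term. The reverse-order term $d_{\mathsf Y}(f_n(\mathsf x_n),f(\mathsf x_n))$ is not directly bounded by the same inequality (the quasi-cone metric is not symmetric), so I would invoke the hypothesis that forward convergence in $\mathsf Y$ is equivalent to backward convergence: pointwise forward convergence $f_n(\mathsf x)\to f(\mathsf x)$ yields the corresponding backward convergence, producing $\mathsf N_2\in\mathbb N$ with $d_{\mathsf Y}(f_n(\mathsf x),f(\mathsf x))\ll \mathsf u'/3$ for $n\geq \mathsf N_2$. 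Set $\mathsf N:=\max(\mathsf N_1,\mathsf N_2)$.

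Next, I would exploit the hypothesis that $f_{\mathsf N}$ itself is forward $(c\mathsf A\mathsf C)$-continuous: since $\{\mathsf x_n\}$ is forward convergent in $\mathsf X$, the image sequence $\{f_{\mathsf N}(\mathsf x_n)\}$ must be forward arithmetic convergent in $\mathsf Y$, providing $m_0\in\mathbb Z$ such that
\[
d_{\mathsf Y}(f_{\mathsf N}(\mathsf x_{<n,m>}),f_{\mathsf N}(\mathsf x_n))\ll \mathsf u'/3\quad\text{whenever }<n,m>\geq m_0.
\]
Combining the three estimates via the triangle inequality
\[
d_{\mathsf Y}(f(\mathsf x_{<n,m>}),f(\mathsf x_n))\leq d_{\mathsf Y}(f(\mathsf x_{<n,m>}),f_{\mathsf N}(\mathsf x_{<n,m>}))+d_{\mathsf Y}(f_{\mathsf N}(\mathsf x_{<n,m>}),f_{\mathsf N}(\mathsf x_n))+d_{\mathsf Y}(f_{\mathsf N}(\mathsf x_n),f(\mathsf x_n))
\]
gives $d_{\mathsf Y}(f(\mathsf x_{<n,m>}),f(\mathsf x_n))\ll \mathsf u'$, finishing the proof.

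The only subtle step, which I regard as the main obstacle to state carefully rather than a technical difficulty, is the use of the forward--backward equivalence in $\mathsf Y$ to flip the order in the third term of the triangle inequality; this is the exact role it plays in the two preceding theorems, and without it uniform forward convergence would bound only $d_{\mathsf Y}(f(\mathsf x),f_n(\mathsf x))$ and not its reversed counterpart. Everything else is a direct transcription of the $\mathsf u'/3$ pattern already established in the paper.
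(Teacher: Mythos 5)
Your proposal is correct and follows essentially the same route as the paper's own proof: the same $\mathsf u'/3$ triangle decomposition through $f_{\mathsf N}$ with $\mathsf N=\max(\mathsf N_1,\mathsf N_2)$, the same use of uniform forward convergence for the term $d_{\mathsf Y}(f(\cdot),f_{\mathsf N}(\cdot))$, the same appeal to the forward--backward equivalence in $\mathsf Y$ to control the reversed term $d_{\mathsf Y}(f_{\mathsf N}(\cdot),f(\cdot))$, and the same invocation of the forward $(c\mathsf A\mathsf C)$-continuity of $f_{\mathsf N}$ for the middle term. Your explicit remark on why the asymmetry forces the use of the equivalence hypothesis is, if anything, stated more carefully than in the paper.
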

\begin{proof}
Let $\mathsf u'\in \mathsf E',\mathsf u'\gg 0$ and consider a sequence $\{\mathsf x_{n}\}$ in $\mathsf X$ that is forward convergent. As $\{f_{n}\}$ is forward convergent to $f$ uniformly, then there exists $\mathsf N_{1}\in \mathbb N$ such that $d_{\mathsf {Y}}(f(x),f_{n}(x))\ll \frac{\mathsf u'}{3},~\forall~\mathsf x\in \mathsf X$ and $n\geq \mathsf N_{1}.$ Particularly, $\{f_{n}(\mathsf x_{n})\}$ is forward convergent to $f.$ Since forward convergence is equivalent to backward convergence. Then $\{f_{n}(\mathsf x_{n})\}$ is backward convergent to $f.$ Hence there exists $\mathsf N_{2}\in \mathbb{N}$ such that $d_{\mathsf Y}(f_{n}(\mathsf x_{n}),f(\mathsf x_{n}))\ll \frac{u'}{3},~\forall~n\geq \mathsf{N_{2}}.$ Take $\mathsf N=\max\{\mathsf N_{1},\mathsf N_{2}\}.$ Since $f_{n}$ is forward $(c\mathsf A \mathsf C)-$continuous. Then there exists $m_{0}> \mathsf {N}$ such that $d_{\mathsf Y}(f_{\mathsf N}(\mathsf x_{<n,m>}),f_{\mathsf N}(\mathsf x_{n}))\ll \frac{\mathsf u'}{3},~\forall~\mathsf x\in \mathsf X$ and $m,n\in \mathbb{Z}$ such that $<m,n>\geq m_{0}.$ Hence we have 
\begin{eqnarray*}
& & d_{\mathsf Y}(f(\mathsf x_{<n,m>}),f(\mathsf x_{n}))\\
&\leq&d_{\mathsf Y}(f(\mathsf x_{<n,m>}),f_{\mathsf N}(\mathsf x_{<n,m>}))+d_{\mathsf Y}(f_{\mathsf N}(\mathsf x_{<n,m>}),f_{\mathsf N}(\mathsf x_{n}))+d_{\mathsf Y}(f_{\mathsf N}(\mathsf x_{n}),f(\mathsf x_{n}))\\
&\ll&\frac{\mathsf u'}{3}+\frac{\mathsf u'}{3}+\frac{\mathsf u'}{3}\\
&=&\mathsf u'.
\end{eqnarray*} 
Hence the proof.
\end{proof}
\begin{theorem}
Let $(\mathsf X,d_{\mathsf X})$ and $(\mathsf Y,d_{\mathsf Y})$ be two quasi-cone metric spaces such that forward and backward convergence in $\mathsf Y$ are equivalent. Then the set of all forward $(c\mathsf A\mathsf C)-$continuous functions from $\mathsf X$ to $\mathsf Y$ is a closed subset of set of all functions from $\mathsf X$ to $\mathsf Y$ that are continuous. 
\end{theorem}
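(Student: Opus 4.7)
The plan is to mirror, almost verbatim, the closure proof that was just carried out for $A_{mff}(\mathsf X,\mathsf Y)$, only replacing \emph{forward arithmetic convergent} sequences in $\mathsf X$ by \emph{forward convergent} sequences in $\mathsf X$, since that is the hypothesis used in the definition of forward $(c\mathsf A\mathsf C)$-continuity. So the argument is a three-piece triangle inequality estimate built on uniform forward convergence $f_n \to f$ together with the forward $(c\mathsf A\mathsf C)$-continuity of a single well-chosen $f_{\mathsf N}$.

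First I would take $f$ in the closure of the set of forward $(c\mathsf A\mathsf C)$-continuous functions, which yields a sequence $\{f_n\}$ of such functions converging uniformly forward to $f$. Fix $\mathsf u'\in\mathsf E'$ with $\mathsf u'\gg 0$ and fix a forward convergent sequence $\{\mathsf x_n\}$ in $\mathsf X$; the task is to show that $\{f(\mathsf x_n)\}$ is forward arithmetic convergent in $\mathsf Y$. Uniform forward convergence produces $\mathsf N_1\in\mathbb N$ with $d_{\mathsf Y}(f(\mathsf x),f_n(\mathsf x))\ll \mathsf u'/3$ for all $n\geq \mathsf N_1$ and all $\mathsf x\in\mathsf X$. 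Specializing to $\mathsf x=\mathsf x_{<n,m>}$ and invoking the equivalence of forward and backward convergence in $\mathsf Y$, I obtain $\mathsf N_2$ with $d_{\mathsf Y}(f_n(\mathsf x_{<n,m>}),f(\mathsf x_{<n,m>}))\ll \mathsf u'/3$ for all $n\geq \mathsf N_2$. Set $\mathsf N=\max\{\mathsf N_1,\mathsf N_2\}$.

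Now $f_{\mathsf N}$ is forward $(c\mathsf A\mathsf C)$-continuous and $\{\mathsf x_n\}$ is forward convergent, so $\{f_{\mathsf N}(\mathsf x_n)\}$ is forward arithmetic convergent in $\mathsf Y$. Hence there exists $m_0>\mathsf N$ with $d_{\mathsf Y}(f_{\mathsf N}(\mathsf x_{<n,m>}),f_{\mathsf N}(\mathsf x_n))\ll \mathsf u'/3$ for all $m,n\in\mathbb Z$ with $<n,m>\geq m_0$. Combining the three estimates via the triangle inequality of $d_{\mathsf Y}$ yields $d_{\mathsf Y}(f(\mathsf x_{<n,m>}),f(\mathsf x_n))\ll \mathsf u'$ for all such $m,n$, which is exactly forward arithmetic convergence of $\{f(\mathsf x_n)\}$ (and, by the hypothesis on $\mathsf Y$, also backward arithmetic convergence). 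Thus $f$ lies in the forward $(c\mathsf A\mathsf C)$-continuous set, establishing closedness.

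The main potential obstacle is not combinatorial but conceptual: in a quasi cone metric space the asymmetry of $d$ matters, so one has to be careful about which argument slot each term sits in when applying the triangle inequality, and one really does need the hypothesis that forward and backward convergence agree in $\mathsf Y$ in order to convert the uniform \emph{forward} convergence into the \emph{backward}-style estimate $d_{\mathsf Y}(f_n(\mathsf x_{<n,m>}),f(\mathsf x_{<n,m>}))\ll \mathsf u'/3$ that appears in the middle triangle term. Everything else is a direct transcription of Theorem \ref{th:1} with ``forward arithmetic convergent sequence in $\mathsf X$'' replaced by ``forward convergent sequence in $\mathsf X$''.
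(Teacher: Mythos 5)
Your argument is correct and is essentially the paper's own: the paper simply remarks that the closedness follows from Theorem \ref{th:1}, and what you have written is exactly that deduction unwound --- a sequence $\{f_n\}$ of forward $(c\mathsf A\mathsf C)$-continuous functions converging uniformly forward to $f$, followed by the same three-term triangle inequality with a well-chosen $f_{\mathsf N}$. You also correctly identify the one place where the hypothesis on $\mathsf Y$ is genuinely needed (flipping the forward estimate $d_{\mathsf Y}(f,f_n)\ll \mathsf u'/3$ into the backward-style middle term), which matches the paper's use of that hypothesis in Theorem \ref{th:1}.
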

\begin{proof}
The proof is omitted as it follows trivially from Theorem \ref{th:1}.
\end{proof}
\begin{theorem}
Composition of two forward arithmetic $ff$-continuous functions in a quasi cone metric space $(\mathsf X,d)$ is arithmetic $ff$-continuous. 
\end{theorem}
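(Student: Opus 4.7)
The plan is to unwind the definition of arithmetic $ff$-continuity twice and chain the two implications. Given two arithmetic $ff$-continuous functions $f,g$ on $(\mathsf X,d)$, I want to show that whenever $\{\mathsf x_n\}$ is a forward arithmetic convergent sequence in $\mathsf X$, the image sequence $\{(g\circ f)(\mathsf x_n)\}$ is forward arithmetic convergent as well; that is exactly the statement that $g\circ f$ is arithmetic $ff$-continuous.

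I would begin the proof by fixing an arbitrary forward arithmetic convergent sequence $\{\mathsf x_n\}$ in $\mathsf X$, so that $\mathsf x_n\stackrel{amf}{\rightarrow}\mathsf x_{<n,m>}$. Since $f$ is arithmetic $ff$-continuous, Definition 3.2 applied to $\{\mathsf x_n\}$ immediately yields that $\{f(\mathsf x_n)\}$ is forward arithmetic convergent, i.e.\ $f(\mathsf x_n)\stackrel{amf}{\rightarrow}f(\mathsf x_{<n,m>})$. At this point I would relabel $\mathsf y_n:=f(\mathsf x_n)$ so that $\{\mathsf y_n\}$ is now a forward arithmetic convergent sequence in $\mathsf X$.

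Now I would apply arithmetic $ff$-continuity of $g$ to $\{\mathsf y_n\}$: by Definition 3.2, $\{g(\mathsf y_n)\}$ is also forward arithmetic convergent, which reads as $(g\circ f)(\mathsf x_n)\stackrel{amf}{\rightarrow}(g\circ f)(\mathsf x_{<n,m>})$. To make the estimate explicit, for any $\mathsf u\gg 0$ one picks the threshold index $\mathsf N\in\mathbb{Z}$ produced by $g$'s arithmetic $ff$-continuity so that $d(g(\mathsf y_{<n,m>}),g(\mathsf y_n))\ll \mathsf u$ whenever $\langle n,m\rangle\geq \mathsf N$; substituting $\mathsf y_n=f(\mathsf x_n)$ gives $d((g\circ f)(\mathsf x_{<n,m>}),(g\circ f)(\mathsf x_n))\ll \mathsf u$ for all such $n,m$. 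This is precisely forward arithmetic convergence of $\{(g\circ f)(\mathsf x_n)\}$.

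There is no real obstacle here — the argument is a pure chaining of two definitions and needs no triangle inequality gymnastics — but the one small point to watch is notational: arithmetic $ff$-continuity in Definition 3.2 is phrased as preservation of the sequence $\{\mathsf x_n\}\mapsto\{f(\mathsf x_n)\}$ under the arithmetic forward convergence relation, not as pointwise continuity at a fixed limit, so I must make sure to invoke it at the level of sequences (first on $\{\mathsf x_n\}$, then on the image sequence $\{f(\mathsf x_n)\}$), rather than trying to track a limit point through the composition.
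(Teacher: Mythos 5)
Your proposal is correct and follows essentially the same route as the paper's own proof: both simply apply arithmetic $ff$-continuity of the inner function to the given forward arithmetic convergent sequence and then apply arithmetic $ff$-continuity of the outer function to the resulting image sequence. The explicit unwinding of the threshold $\mathsf N$ in your last step is a harmless elaboration of the same chaining argument.
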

\begin{proof}
Let $g,h$ be two forward arithmetic $ff$-continuous functions. We shall show that $g\circ h$ is again an arithmetic $ff$-continuous function. For this, let $\{\mathsf x_{n}\}$ be a forward  arithmetic convergent sequence in $\mathsf X.$ Since $h$ is arithmetic continuous. Then $\{h(\mathsf x_{n})\}$ is forward arithmetic convergent. Moreover, $g$ is also arithmetic $ff$-continuous. Then $\{g(h(\mathsf x_{n}))\}$ is also forward arithmetic convergent. Hence $g\circ h$ is also arithmetic $ff$-continuous.  This proves the theorem.
\end{proof}  
\begin{definition}
Let $(\mathsf X,d)$ be a quasi cone metric space and $\mathsf B\subset \mathsf X.$ Then $\mathsf B$ is said to be forward (resp. backward) arithmetic compact if every sequence in $\mathsf B$ admits forward (resp. backward) arithmetic convergent subsequence. 
\end{definition}
\begin{theorem}\label{th:2}
Let $(\mathsf X,d_{\mathsf X})$ and $(\mathsf Y,d_{\mathsf Y})$ be two quasi-cone metric spaces, $\mathsf B\subset \mathsf X$ be forward arithmetic compact and $f:\mathsf X\to \mathsf Y$ be an arithmetic $ff-$continuous function. Then $f(\mathsf B)$ is also forward arithmetic compact.
\end{theorem}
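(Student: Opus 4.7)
The proof will follow the standard template for ``continuous image of a compact set is compact,'' adapted to the arithmetic setting. The plan is to take an arbitrary sequence in $f(\mathsf B)$, pull it back to a sequence in $\mathsf B$ via the axiom of choice, exploit forward arithmetic compactness of $\mathsf B$ to extract a forward arithmetic convergent subsequence, and then push it forward via $f$ using the arithmetic $ff$-continuity hypothesis.

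In more detail, I would start by fixing a sequence $\{\mathsf y_n\}\subset f(\mathsf B)$. For each $n$, choose $\mathsf x_n\in\mathsf B$ with $f(\mathsf x_n)=\mathsf y_n$; this gives a sequence $\{\mathsf x_n\}$ in $\mathsf B$. By forward arithmetic compactness of $\mathsf B$, there exists a subsequence $\{\mathsf x_{n_k}\}$ that is forward arithmetic convergent, that is, for every $\mathsf u\in\mathsf E$ with $\mathsf u\gg 0$ one can find $\mathsf N\in\mathbb Z$ so that $d_{\mathsf X}(\mathsf x_{\langle n_k, n_j\rangle},\mathsf x_{n_k})\ll \mathsf u$ whenever $\langle n_k,n_j\rangle\ge\mathsf N$.

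Next I would invoke the arithmetic $ff$-continuity of $f$, which the paper uses in the form ``forward arithmetic convergent sequences in $\mathsf X$ are mapped to forward arithmetic convergent sequences in $\mathsf Y$.'' Applying this to $\{\mathsf x_{n_k}\}$ yields that $\{f(\mathsf x_{n_k})\}=\{\mathsf y_{n_k}\}$ is forward arithmetic convergent in $\mathsf Y$. Since this subsequence lies in $f(\mathsf B)$, we have extracted a forward arithmetic convergent subsequence of the original sequence $\{\mathsf y_n\}$, so $f(\mathsf B)$ is forward arithmetic compact.

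The argument is essentially a two-step pull-back/push-forward, and there is no real obstacle once one accepts the intended reading of arithmetic $ff$-continuity as a sequential preservation property. The only delicate point worth flagging is that the definition given earlier phrases $ff$-continuity \emph{at a point} $x$, whereas arithmetic convergence as defined has no canonical limit; in the proof one must use the sequential version (convergent image of convergent), which is how arithmetic $ff$-continuity is consistently applied throughout the preceding theorems in the paper. No additional hypothesis (such as equivalence of forward and backward convergence) is needed here.
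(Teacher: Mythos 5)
Your proposal follows exactly the same pull-back/push-forward argument as the paper's own proof: lift $\{\mathsf y_n\}$ to $\{\mathsf x_n\}\subset\mathsf B$, extract a forward arithmetic convergent subsequence by compactness, and apply arithmetic $ff$-continuity to conclude. Your added remark about reading $ff$-continuity as a sequential preservation property is a fair clarification, but the route is the same.
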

\begin{proof}
Let $\{\mathsf y_{n}\}$ be a sequence in $f(\mathsf B).$ Then $\mathsf y_{n}=f(\mathsf x_{n})$ for some $\mathsf x_{n}\in \mathsf X$ and $n\in \mathbb {N}.$ Also, $\{\mathsf x_{n}\}$ has a forward arithmetic convergent subsequence $\{\mathsf x_{n_{l}}\}.$ By the arithmetic $ff-$continuity of $f$ we see that $f(\mathsf x_{n})$ has a forward arithmetic convergent subsequence $f(\mathsf x_{n_{l}}).$ This proves that $f(\mathsf B)$ is forward arithmetic compact. 
\end{proof}
\begin{theorem}
Let $(\mathsf X,d_{\mathsf X})$ and $(\mathsf Y,d_{\mathsf Y})$ be two quasi-cone metric spaces, $\mathsf B\subset \mathsf X$ be backward arithmetic compact and $f:\mathsf X\to \mathsf Y$ be an arithmetic $fb-$continuous function. Then $f(\mathsf B)$ is also backward arithmetic compact.
\end{theorem}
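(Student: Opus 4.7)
The plan is to mirror the proof of Theorem \ref{th:2} line-for-line, replacing ``forward'' by ``backward'' throughout. Starting with an arbitrary sequence $\{\mathsf y_{n}\}$ in $f(\mathsf B)$, I would write $\mathsf y_{n} = f(\mathsf x_{n})$ for suitable $\mathsf x_{n}\in \mathsf B$ (which exist since $\mathsf y_n$ lies in the image of $\mathsf B$ under $f$), and then invoke the backward arithmetic compactness of $\mathsf B$ to pass to a subsequence $\{\mathsf x_{n_{l}}\}$ that is backward arithmetic convergent.

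The second step is to feed this subsequence to $f$. Applying the arithmetic $fb$-continuity hypothesis to $\{\mathsf x_{n_{l}}\}$ should yield that $\{f(\mathsf x_{n_{l}})\} = \{\mathsf y_{n_{l}}\}$ is backward arithmetic convergent in $\mathsf Y$. This exhibits a backward arithmetic convergent subsequence of the arbitrary sequence $\{\mathsf y_{n}\}\subset f(\mathsf B)$, which is precisely the definition of $f(\mathsf B)$ being backward arithmetic compact, and the proof is complete in two lines.

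The main obstacle is not computational but interpretive. As stated in Definition~3.2, arithmetic $fb$-continuity converts a \emph{forward} arithmetic convergent input in $\mathsf X$ to a backward arithmetic convergent output in $\mathsf Y$, whereas the subsequence extracted from backward arithmetic compactness is backward arithmetic convergent. The natural reading, consistent with the author's practice of pairing forward/backward analogs via ``resp.''\ and of marking backward duals of forward theorems as following ``trivially'' (cf.\ the remark after Theorem \ref{th:4}), is that the $fb$-continuity label should be understood in its backward-to-backward dual sense when used in the backward-compactness theorem. Once this convention is fixed, no further obstacle remains and the argument reduces exactly to the proof of Theorem \ref{th:2}.
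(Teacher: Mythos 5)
Your proposal takes exactly the paper's route: the paper's own ``proof'' is the single remark that the result follows trivially from Theorem~\ref{th:2} by dualizing forward to backward, which is precisely the two-step argument you spell out. Your further observation is well taken and in fact exposes something the paper silently glosses over: as literally defined, arithmetic $fb$-continuity accepts \emph{forward} arithmetic convergent inputs and produces backward arithmetic convergent outputs, so it cannot be applied verbatim to the backward arithmetic convergent subsequence that backward compactness supplies; your reading of the hypothesis in its backward-to-backward dual sense is the only way the statement goes through and is evidently what the authors intend, so with that convention fixed your argument is complete and coincides with the paper's.
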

\begin{proof}
The proof is omitted as it follows trivially from Theorem \ref{th:2}.
\end{proof}
\begin{theorem}\label{th:3}
Let $(\mathsf X,d)$ be a quasi cone metric space and $\mathsf B\subset \mathsf X$ be forward arithmetic compact. Then any closed subset of $\mathsf B$ is forward arithmetic compact.
\end{theorem}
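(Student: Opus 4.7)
The plan is very direct: invoke the forward arithmetic compactness of $\mathsf B$ and observe that the resulting subsequence automatically lies in the given closed subset.

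First I would fix a closed subset $\mathsf C$ of $\mathsf B$ and an arbitrary sequence $\{\mathsf x_{n}\}$ in $\mathsf C$. Since $\mathsf C\subset \mathsf B$, this is also a sequence in $\mathsf B$, so the forward arithmetic compactness of $\mathsf B$ supplies a forward arithmetic convergent subsequence $\{\mathsf x_{n_{l}}\}$. Because every term $\mathsf x_{n_{l}}$ already appears in the original sequence $\{\mathsf x_{n}\}\subset \mathsf C$, we have $\mathsf x_{n_{l}}\in \mathsf C$ for every $l$. Thus $\{\mathsf x_{n_{l}}\}$ is a forward arithmetic convergent subsequence lying in $\mathsf C$, and hence $\mathsf C$ is forward arithmetic compact.

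I do not anticipate any real obstacle, and the argument will amount to essentially a one-paragraph verification (similar in spirit to the proof of Theorem \ref{th:2}). The underlying reason this is so easy is that forward arithmetic convergence, as defined in the paper, is an intrinsic Cauchy-type condition on the sequence itself, controlling the distances $d(\mathsf x_{<n,m>},\mathsf x_{n})$ rather than convergence to an external limit point. Consequently, unlike the classical metric setting where closedness is required to ensure that the limit of the extracted subsequence stays in the set, here the membership $\mathsf x_{n_{l}}\in \mathsf C$ follows from the inclusion $\mathsf C\subset \mathsf B$ alone. The closedness hypothesis therefore appears more as a customary formulation than as an essential ingredient of the proof.
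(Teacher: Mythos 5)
Your proposal is correct and follows essentially the same route as the paper: view the sequence as living in $\mathsf B$, extract a forward arithmetic convergent subsequence, and note that its terms remain in the closed subset. Your side observation that the closedness hypothesis does no real work is accurate as well --- the paper's own proof invokes ``$\mathsf A$ is closed'' only nominally, since forward arithmetic convergence is an intrinsic condition on the terms $d(\mathsf x_{<n,m>},\mathsf x_{n})$ with no external limit point that would need to be captured.
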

\begin{proof}
Let $\mathsf B\subset \mathsf X$ be forward arithmetic compact and $\mathsf A$ be a closed set contained in $\mathsf B.$ Consider a sequence $\{\mathsf x_{n}\}$ in $\mathsf A.$ Then $\{\mathsf x_{n}\}$ is a sequence in $\mathsf B.$ Also, the sequence $\{\mathsf x_{n}\}$ has a forward arithmetic convergent subsequence $\{\mathsf x_{n_{l}}\}$ as $\mathsf B$ is forward arithmetic compact. Moreover, $\mathsf A$ is closed. Then the sequence $\{\mathsf x_{n}\}$ in $\mathsf A$ has a forward arithmetic convergent subsequence in $\mathsf A.$ Hence the proof. 
\end{proof}
\begin{theorem}
Let $(\mathsf X,d)$ be a quasi cone metric space and $\mathsf B\subset \mathsf X$ be backward arithmetic compact. Then any closed subset of $\mathsf B$ is backward arithmetic compact.
\end{theorem}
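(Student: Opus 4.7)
The plan is to mirror the argument given for Theorem \ref{th:3}, replacing forward arithmetic convergence by its backward counterpart throughout. Since the two notions are defined in a fully symmetric way (just switching the order of the arguments in $d(\cdot,\cdot)$ in Definition 3.1), the structural steps carry over verbatim.

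First I would fix a closed subset $\mathsf A \subseteq \mathsf B$ and take an arbitrary sequence $\{\mathsf x_n\}$ in $\mathsf A$. Since $\mathsf A \subseteq \mathsf B$, this is also a sequence in $\mathsf B$, and backward arithmetic compactness of $\mathsf B$ immediately yields a subsequence $\{\mathsf x_{n_l}\}$ that is backward arithmetic convergent; that is, for every $\mathsf u \in \mathsf E$ with $\mathsf u \gg 0$ there exists $\mathsf N \in \mathbb{Z}$ such that $d(\mathsf x_{n_l}, \mathsf x_{\langle n_l, m_l\rangle}) \ll \mathsf u$ whenever $\langle n_l, m_l\rangle \geq \mathsf N$.

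Next I would observe that the subsequence $\{\mathsf x_{n_l}\}$ lies entirely in $\mathsf A$, because every term of the original sequence was chosen in $\mathsf A$. Hence $\{\mathsf x_n\}$ has a backward arithmetic convergent subsequence whose terms stay in $\mathsf A$, which is exactly what is required for $\mathsf A$ to be backward arithmetic compact. The closedness of $\mathsf A$ is invoked only to assert that the subsequence ``stays in $\mathsf A$'', in parallel with the way Theorem \ref{th:3} uses closedness of the forward case.

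I do not expect any real obstacle: backward arithmetic convergence is an intrinsic Cauchy-type condition on the sequence itself (it does not mention an external limit point), so no limit-point-in-$\mathsf A$ argument is needed and the closedness hypothesis plays only a book-keeping role. The entire proof is therefore a direct transcription of the proof of Theorem \ref{th:3}, with $d(\mathsf x_{\langle n,m\rangle}, \mathsf x_n)$ replaced by $d(\mathsf x_n, \mathsf x_{\langle n,m\rangle})$; this justifies the author's choice to omit it as an easy analog of the forward version.
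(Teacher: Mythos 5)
Your proposal is correct and matches the paper's intent exactly: the paper simply omits this proof as the trivial backward analogue of Theorem \ref{th:3}, and your transcription (take a sequence in the closed subset $\mathsf A\subseteq\mathsf B$, extract a backward arithmetic convergent subsequence using backward arithmetic compactness of $\mathsf B$, and note its terms already lie in $\mathsf A$) is precisely that argument with the arguments of $d$ swapped. Your observation that closedness is only a book-keeping hypothesis, since arithmetic convergence is an intrinsic condition with no external limit point, is also consistent with how the paper's proof of the forward case actually uses (or rather, does not really use) it.
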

\begin{proof}
The proof is omitted as it follows trivially from Theorem \ref{th:3}.

\end{proof}

\end{document}